\newcommand{\R}{\mathbb R}
\newcommand{\Pp}{\mathcal P}
\newtheorem{theorem}{Theorem}[section]
\newtheorem{lemma}[theorem]{Lemma}
\newtheorem{proposition}{Proposition}[section]
\title{Equicovering masses in the Euclidean plane}
\author[M. A. Espinosa-García]{Manuel A. Espinosa-García}
\address[M. A. Espinosa-García]{Centro de Ciencias Matemáticas, UNAM Campus Morelia, Morelia, Mexico}
\email{esgama@matmor.unam.mx}
\author[L. Martínez-Sandoval]{Leonardo Martínez-Sandoval}
\address[L. Martínez-Sandoval]{Facultad de Ciencias, UNAM, Ciudad de México, México}
\email{leomtz@ciencias.unam.mx}
\author[E. Roldán-Pensado]{Edgardo Roldán-Pensado}
\address[E. Roldán-Pensado]{Centro de Ciencias Matemáticas, UNAM Campus Morelia, Morelia, Mexico}
\email{e.roldan@im.unam.mx} 
\keywords{Measure equipartitions, $k$-fans, equicoverings, centerpoint}
\begin{document}

\begin{abstract}
Classic mass partition results are about dividing the plane into regions that are equal with respect to one or more measures (masses). We introduce a new concept in which the notion of partition is replaced by that of a cover. In this case we require (almost) every point in the plane to be covered the same number of times. If all elements of this cover are equal with respect to the given masses, we refer to them as equicoverings.

To construct equicoverings, we study a natural generalization of $k$-fan partitions, which we call spiral equicoverings. Like $k$-fans, these consist of wedges centered at a common point, but arranged in a way that allows overlapping. Our main result nearly characterizes all reduced positive rational numbers $p/q$ for which there exists a covering by $q$ convex wedges such that every point is covered exactly $p$ times. The proofs use results about centerpoints and combine tools from classical mass partition results, and elementary number theory.
\end{abstract}

\maketitle

\section{Introduction}

Let $\mu$ be a Borel probability measure on $\R^2$. We say that $\mu$ is a \emph{mass} if the $\mu$-measure of any line is zero.

Let $\mu_1,\dots,\mu_j$ be masses in $\R^2$.
Mass partition problems in the plane typically involve finding a partition of the plane $\Pp$ into $k$ pieces that are equal with respect to each of the masses $\mu_1,\dots,\mu_j$ \cite{RS2022}.
Such a partition $\Pp$ is called an \emph{equipartition}.
In practice, the closures of the elements of $\Pp$ are almost always polygons (possibly unbounded), so we slightly abuse the definition and assume that all pieces are closed, even if this means their boundaries may overlap.

The most well-known example of a mass partition result is the Ham Sandwich Theorem which, in the planar case, states that for any two masses there always exists an equipartition obtained by splitting the plane with a single line \cite{BZ2004}.
Another example is a theorem of Buck and Buck, which states that for a single mass, there exists an equipartition of the plane into six parts obtained by splitting the plane using three concurrent lines \cite{BB1949}.

These two examples can be generalized using a \emph{$k$-fan}, which consists of $k$ rays emanating from a single point. A $k$-fan divides the plane into regions called \emph{wedges}. If a $k$-fan determines $k$ convex wedges, it is called a \emph{convex $k$-fan}; otherwise, there is exactly one non-convex wedge. The set of wedges obtained from a $k$-fan forms a partition of the plane.

Several mass partition results involving $k$-fans are known.
For three masses, there exists an equipartition obtained from a $2$-fan \cite{BM2001}.
For two masses, there exists an equipartition obtained from a convex $3$-fan \cite{IUY2000}.
Likewise, for two masses, there exists an equipartition obtained from a $4$-fan \cite{BM2002}.
More general results can be found in \cite{BDB2007,BBDB2013,VZ2003,BBS2010}.

In this work, we no longer consider partitions of the plane, but rather covers in which almost every point is covered the same number of times. To be precise, let $\mu_1,\dots,\mu_j$ be masses in $\R^2$. A \emph{$(k,r)$-equicovering} of the plane respect to these masses is a collection $\Pp$ of $k$ sets such that:
\begin{enumerate}
	\item each set in $\Pp$ has the same $\mu_i$-measure for every $i=1,\dots,j$, and
	\item every point in $\R^2$, except those lying on the union of finitely many lines, is contained in exactly $r$ elements of $\Pp$.
\end{enumerate}

A simple way to obtain such a cover is by using $k$-fans. Assume the $k$-fan consists of rays $\ell_1,\dots,\ell_k$, ordered counter-clockwise and emanating from a point $O$. For each $i = 1,\dots,k$, we define the \emph{wedge} $V_i$ as the region obtained by sweeping the ray $\ell_i$ counter-clockwise around $O$ until it reaches the ray $\ell_{i+1}$, and the \emph{$r$-wedge} $W_i$ as the region obtained by sweeping the ray $\ell_i$ counter-clockwise until it reaches $\ell_{i+r}$ (with indices taken modulo $k$). Equivalently, $W_i = \bigcup_{j=i}^{i+r-1} V_j$.
The collection $\Pp = \{W_1,\dots,W_k\}$ then forms a cover of the plane in which almost every point is covered exactly $r$ times, except for those lying on the rays $\ell_i$. We call $\mathcal{P}$ a \emph{$(k,r)$-spiral}, and if every $W_i$ is convex, we call it a \emph{convex $(k,r)$-spiral}.

Let $\mu$ be a mass, and let $p/q < 1$ be a positive rational number expressed in reduced form (i.e., $p, q > 0$ with $p$ and $q$ coprime). We wish to determine whether there exists a convex $(q,p)$-spiral $\Pp$ such that each element of $\Pp$ has $\mu$-measure $p/q$. We call such a cover a \emph{convex $(q,p)$-spiral equicovering}, or simply a \emph{spiral equicovering} when the values of $p$ and $q$ are clear from context.

When $p$ and $q$ are coprime, as above, each wedge $V_i$ has $\mu$-measure $1/q$. Moreover, note that if we remove the convexity requirement, the problem becomes trivial (see the basic construction in Section~\ref{sec:proof}).

Our main result is the following.
\begin{theorem}\label{main}
    Assume that $p/q$ is a reduced positive rational number.
    \begin{enumerate}
        \item If $q < 2p$, then there exists a mass $\mu$ that admits no spiral equicovering.
        \item If $q < 3p - 3$, then there exists a mass $\mu$ that admits no spiral equicovering.
        \item If $q = 3p - 3$ and $p$ is odd, then every mass $\mu$ admits a spiral equicovering.
        \item If $3p - 3 < q < 3p$ and $p \geq 2$, then every mass $\mu$ admits a spiral equicovering.
        \item If $q \geq 3p$, then every mass $\mu$ admits a spiral equicovering.
    \end{enumerate}
\end{theorem}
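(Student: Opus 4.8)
The plan is to translate the convexity requirement into a statement about half-planes and then to place the dividing rays using a centerpoint together with an elementary counting/number-theoretic argument.

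\textbf{Reduction.} Fix a candidate apex $O$ and let $\nu$ be the push-forward of $\mu$ to the circle of directions around $O$. For a direction $\theta$ write $G(\theta)$ for the $\nu$-measure of the half-plane swept from $\theta$ through angle $\pi$, i.e. of the directions in $[\theta,\theta+\pi)$. A convex $(q,p)$-spiral centered at $O$ is the same as a choice of directions $a_1<\cdots<a_q$ with $\nu$-mass exactly $1/q$ between consecutive rays (so each $V_i$ has mass $1/q$ and each $W_i$ mass $p/q$) satisfying $G(a_i)\ge p/q$ for every $i$: indeed $W_i=[a_i,a_{i+p})$ has $\nu$-mass $p/q$ and is convex (angular width at most $\pi$) exactly when it fits inside the half-plane $[a_i,a_i+\pi)$. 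Thus the problem becomes: can we place $q$ points, equally spaced in $\nu$-mass, all inside the good set $\{G\ge p/q\}$? The two degrees of freedom are the apex $O$ and the rotation (starting position) of the equally spaced family.

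\textbf{Positive results.} I would always take $O$ to be a centerpoint of $\mu$, so every half-plane through $O$ has measure at least $1/3$, i.e. $G\ge 1/3$ everywhere. When $q\ge 3p$ we have $p/q\le 1/3\le G$, so the good set is the whole circle and \emph{any} equipartition works, which is (5). For (3) and (4) we have $1/3<p/q\le 1/2$ and must avoid the bad set $B=\{G<p/q\}$. Here I use the rotation: as the starting position runs over one period, each of the $q$ sample points sweeps the circle once, so the set of bad starting positions lies in the union of $q$ translates of $B$. The crude bound $\mu(B)<1/q$ only yields $q\ge 3p$, so the heart of the matter is to exploit the structure of $B$: from $G\ge 1/3$, $\int G\,d\nu=\tfrac12$ and the antipodal relation $G(\theta+\pi)=1-G(\theta)$, the bad set is a controlled union of arcs whose endpoints sit at the mass-gaps, and the equally-$\mu$-spaced points then miss $B$ as soon as $q\ge 3p-3$. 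At the exact threshold $q=3p-3$ (so $3\mid q$) the placement is marginal — the spacing and the bad arcs line up — and a successful rotation exists precisely when $p$ is odd, giving (3); the strict range $3p-3<q<3p$ gives (4).

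\textbf{Negative results.} For (1) no mass can help at all: since each $V_i$ lies in exactly $p$ of the windows, the widths of $W_1,\dots,W_q$ sum to $2\pi p$, so if all were at most $\pi$ we would need $q\ge 2p$. For (2) I would take $\mu$ to be a smooth approximation of three equal point masses at the vertices of a triangle. Then the apex is forced inside the triangle, $G$ takes essentially the two values $1/3$ and $2/3$ with sawtooth transitions across the three gap directions, and $B$ is three arcs ending at those gaps; running the counting of the positive case in reverse shows that $q$ equally-$\mu$-spaced points cannot all avoid $B$ once $q<3p-3$, so this $\mu$ admits no spiral equicovering.

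\textbf{Main obstacle.} The reduction, the centerpoint step (5), and the width count (1) are routine. The genuine difficulty is matching the two sides of the threshold: pushing the positive bound from the easy $3p$ down to $3p-3$ and pinning the parity in (3), and dually building a three-cluster mass that is sharp for (2). Both rest on the same delicate but elementary arithmetic of how $q$ equally spaced points interact with arcs of prescribed length positioned at the mass-gaps; handling the boundary case $q=3p-3$ exactly — including why $p$ even must be left open — is where the care is needed.
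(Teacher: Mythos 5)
Your reduction to the circle of directions, your width-count for (1) (summing all $q$ window angles to get $2\pi p\le q\pi$, a clean variant of the paper's two-wedge argument), and your centerpoint proof of (5) are all correct and agree with the paper. The genuine gap is in (3) and (4): you fix the apex $O$ at an \emph{arbitrary} centerpoint and claim that the structural facts $G\ge 1/3$, $\int G\,d\nu=1/2$, and $G(\theta+\pi)=1-G(\theta)$ force the bad set $B=\{G<p/q\}$ to be avoidable by some rotation once $q\ge 3p-3$. This is false, and no rotation argument at a generic centerpoint can repair it. Concretely, take $p=3$, $q=6$, so $p/q=1/2$: convexity of all six $3$-wedges requires $G(a_i)\ge 1/2$ for every $i$, and since the angle from $a_i$ to $a_{i+3}$ plus the angle from $a_{i+3}$ back to $a_i$ equals $2\pi$, both angles must equal exactly $\pi$; hence $a_1\cup a_4$, $a_2\cup a_5$, $a_3\cup a_6$ are three concurrent lines, each bisecting $\mu$. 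For the uniform measure on a disk a line bisects if and only if it passes through the disk's center, so the apex is forced to be the center --- yet every point within roughly $0.26$ of the radius is a centerpoint, and at any off-center centerpoint \emph{every} rotation fails. The same phenomenon hits case (4): for $p=2$, $q=5$ and the disk with apex on the boundary of the centerpoint region, one can check that $B$ is a single arc of $\nu$-measure about $0.29>1/5$, so every equispaced $5$-point configuration meets it. The missing idea is that the apex must be produced by a mass-partition theorem, not merely taken to be a centerpoint: the paper applies a generalized Buck--Buck lemma giving three concurrent lines that cut six sectors of prescribed measures $a,b,c,a,b,c$ with $a+b+c=1/2$, so that any three consecutive sectors form a half-plane, and then threads the spiral's rays through the six boundary rays. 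With $a=b=c=1/6$ this settles $q=3p-3$, $p$ odd (the parity enters through divisibility --- $q=6r$ when $p=2r+1$, so each sixth holds an integer number of wedges --- not through any rotation arithmetic), and for $q=3p-1,3p-2$ one runs the construction with wedges of measure $1/(2q)$, obtaining two interleaved orbits of $(2p)$-wedges, at least one of which is convex.

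Your part (2) is also only a sketch, and the phrase ``running the counting of the positive case in reverse'' cannot be executed, since the counting it refers to is the flawed rotation bound above. The choice of mass is right (three equal clusters at the vertices of a triangle, i.e., a mass whose centerpoint constant $1/3$ is tight), but the paper's argument is direct and needs no counting: for any apex $O$ there is a closed half-plane $H$ through $O$ with $\mu(H)\le 1/3$; the $p$-wedge $W_j$ starting at the last ray outside $H$ must, by convexity, end at a ray inside $H$, so its part outside $H$ is trapped in a region of measure at most $1/q$, giving $\mu(W_j)\le 1/q+1/3<p/q$ exactly when $q<3p-3$, a contradiction. You would need to supply an argument of this kind for (2), and replace the centerpoint-plus-rotation scheme by an apex chosen via the six-sector equipartition for (3) and (4).
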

As can be seen, there is a small gap in this theorem: the problem remains open for the case $q = 3p - 3$ when $p \geq 4$ is even. In particular, the case $p/q = 4/9$ remains unresolved.

\section{Proof of Theorem~\ref{main}}\label{sec:proof}

We divide the proof of Theorem~\ref{main} into several propositions.
The examples proving parts~(1) and~(2) are constructed in Propositions~\ref{prop:below2p} and~\ref{prop:below3p}, respectively. The cases where spiral equicoverings are guaranteed are treated in the subsequent propositions: Part~(5) is proved in Proposition~\ref{prop:firstpos}, Part~(3) is proved by Proposition~\ref{prop:3p-3}, and Part~(4) is divided into two cases, established by Propositions~\ref{prop:3p-1} and~\ref{prop:3p-2}.

\begin{proposition}\label{prop:below2p}
    Let $p/q$ be a reduced positive rational number. If $q < 2p$, then no mass $\mu$ admits a $(q,p)$-spiral equicovering.
\end{proposition}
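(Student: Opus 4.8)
The plan is to argue purely at the level of angles, so that the conclusion holds independently of the mass $\mu$. Write the $(q,p)$-spiral as arising from rays $\ell_1,\dots,\ell_q$ emanating counter-clockwise from a point $O$, and for each $i$ let $\theta_i \in (0,2\pi)$ denote the angle of the wedge $V_i$ spanned from $\ell_i$ to $\ell_{i+1}$ (indices modulo $q$). Since the $V_i$ sweep around $O$ exactly once, their angles satisfy $\sum_{i=1}^{q} \theta_i = 2\pi$. The $p$-wedge $W_i = \bigcup_{j=0}^{p-1} V_{i+j}$ then has angle $\alpha_i := \sum_{j=0}^{p-1}\theta_{i+j}$, the sum of $p$ consecutive wedge angles.

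The key observation is a double-counting identity. Summing the angles of all $q$ of the $p$-wedges, each $\theta_k$ is counted once for every $W_i$ containing $V_k$; the indices with this property are $i \in \{k-p+1,\dots,k\}$ modulo $q$, exactly $p$ of them. Hence
\[
\sum_{i=1}^{q} \alpha_i \;=\; \sum_{i=1}^{q}\sum_{j=0}^{p-1}\theta_{i+j} \;=\; p\sum_{k=1}^{q}\theta_k \;=\; 2\pi p.
\]

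Next I would invoke convexity. A wedge at $O$ is convex precisely when its angle is at most $\pi$; a wedge of angle exceeding $\pi$ is the complement of a wedge and is non-convex. If the spiral were convex, then $\alpha_i \le \pi$ for every $i$, whence $\sum_{i=1}^{q}\alpha_i \le \pi q$. Combined with the identity above, this forces $2\pi p \le \pi q$, i.e.\ $q \ge 2p$, contradicting the hypothesis $q < 2p$. Equivalently, when $q < 2p$ the average angle $\tfrac{1}{q}\sum_i \alpha_i = 2\pi p/q$ strictly exceeds $\pi$, so by pigeonhole at least one $W_i$ has angle greater than $\pi$ and is therefore non-convex.

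Because this averaging argument uses only the angles of the rays and never the measure itself, it rules out a convex $(q,p)$-spiral for \emph{every} mass simultaneously, yielding the stronger statement of the proposition. There is essentially no obstacle beyond recording the convexity/angle correspondence and the double count; the only point worth checking is that each $\theta_i$ is genuinely positive, which holds since each $V_i$ carries $\mu$-measure $1/q > 0$, so no wedge degenerates.
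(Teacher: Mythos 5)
Your proof is correct, but it takes a genuinely different route from the paper's. The paper argues locally and through the measure: it considers just the two consecutive $p$-wedges $W_1$ and $W_{p+1}$, observes that $\mu(W_1)+\mu(W_{p+1}) = 2p/q > 1$, and concludes that since $\mu$ is a probability measure these two wedges must overlap, so their angles sum to more than $2\pi$ and hence at least one exceeds $\pi$ and fails to be convex. Your argument is global and essentially measure-free: the double-counting identity $\sum_i \alpha_i = 2\pi p$ plus averaging shows that \emph{any} $(q,p)$-spiral with $q<2p$ contains a $p$-wedge of angle greater than $\pi$, no matter what the mass does. This buys a strictly stronger conclusion than the proposition asks for --- when $q<2p$ there is no convex $(q,p)$-spiral at all, not merely no spiral equicovering --- and it isolates the obstruction as purely geometric. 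One remark: the place where the measure quietly re-enters your write-up, namely the positivity of each $\theta_i$, is doing more work than mere non-degeneracy. Without it, $p$ consecutive wedge angles could sum to exactly $2\pi$, making $W_i$ the whole plane, which is convex despite having angle greater than $\pi$; so the equivalence ``convex if and only if angle at most $\pi$'' genuinely needs $\alpha_i < 2\pi$, which your observation that $\mu(V_i)=1/q>0$ forces (a degenerate wedge is a ray, and masses vanish on rays). With that caveat recorded, your proof is complete; the paper's version is shorter and avoids bookkeeping over all $q$ wedges, at the cost of invoking the equicovering hypothesis in an essential way.
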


\begin{proof}
    Assume that there exists a mass $\mu$ with a spiral equicovering $\{W_1,\dots,W_q\}$ determined by rays $\ell_1,\dots,\ell_q$. Since the $p$-wedges $W_1$ and $W_{p+1}$ satisfy
    \[
        \mu(W_1) + \mu(W_{p+1}) = \frac{p}{q} + \frac{p}{q} > 1,
    \]
    their angles must add up to more than $2\pi$. Therefore, $W_1$ and $W_{p+1}$ cannot both be convex.
\end{proof}

For the second part of Theorem~\ref{main}, we recall the following definitions and results. Given a mass $\mu$, a \emph{center-point} of $\mu$ is a point $O$ such that every closed half-plane $H$ containing $O$ satisfies $\mu(H) \geq 1/3$. It is well known that a center-point exists for any mass in the plane, and that there exist masses for which the bound $1/3$ is tight and cannot be improved \cite{Mat2013}.
        
\begin{figure}
    \centering
    \includegraphics[width=0.45\textwidth]{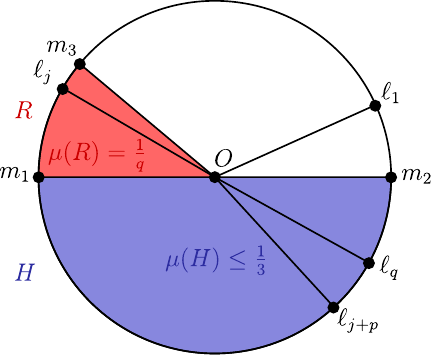}
    \caption{A mass that does not admit a spiral equicovering when $q < 3p - 3$.}
    \label{fig:pq}
\end{figure}

\begin{proposition}\label{prop:below3p}
    Let $p/q$ be a reduced positive rational number. If $q < 3p - 3$, then there exists a mass $\mu$ that admits no $(q,p)$-spiral equicovering.
\end{proposition}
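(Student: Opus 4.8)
The plan is to construct a mass $\mu$ concentrated near a configuration that forces some $p$-wedge to be non-convex whenever $q < 3p-3$. The natural candidate is a mass for which the centerpoint bound $1/3$ is essentially tight: take $\mu$ to be (a smoothed version of) three equal point masses of weight $1/3$ placed at the vertices of a triangle, so that any closed half-plane avoiding all three vertices has measure $0$, and any closed half-plane containing exactly one vertex has measure exactly $1/3$. The key feature of such a mass is that it is extremely ``lumpy'': to capture a prescribed fraction of the mass with a wedge, the wedge's boundary rays are heavily constrained by where they fall relative to the three lumps.

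First I would make the extremal mass precise, replacing the atoms by three small smooth blobs $B_1, B_2, B_3$ of mass $1/3$ each, located at the vertices of a triangle $T$ with the apex $O$ of any putative spiral necessarily lying somewhere relative to $T$. The idea is that a convex wedge is an intersection of two half-planes through $O$, so its measure is controlled by how many blobs lie in it; since each blob has mass $1/3$ and each individual wedge $V_i$ must have measure $1/q$, the fine structure of how the $q$ rays interleave with the three blobs is what I would analyze. Concretely, I expect to show that the rays must cut through the blobs in a way that forces the cumulative angular measure: a convex $p$-wedge $W_i$ has measure $p/q$, and as $i$ increases the wedges sweep around $O$; convexity means each $W_i$ spans an angle less than $\pi$.

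The crux of the argument is a counting or angular-deficit estimate. Here is the mechanism I would exploit: consider the three ``antipodal'' pairs of wedges and sum their measures. As in Proposition~\ref{prop:below2p}, for any convex wedge $W$ the opposite wedge $W'$ (the one spanned by the complementary rays) together with $W$ covers the plane minus the apex region, so their angles sum to $2\pi$ and hence $\mu(W)+\mu(W') \le 1$ forces constraints; but now I would iterate this around the full cycle. Summing $\mu(W_i)$ over all $q$ wedges gives exactly $p$ (since every point is covered $p$ times and $\mu$ is a probability measure), which is automatic; the real leverage comes from the centerpoint property. Specifically, I would argue that if all $W_i$ are convex, then partitioning the angular circle into the $q$ wedge-spans and using that each half-plane through $O$ carries at least $1/3$ of the mass, one derives a lower bound on how many wedges are needed, of the form $q \ge 3p - 3$. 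The $-3$ term should arise precisely from the three blobs: three of the ``half-plane'' inequalities are tight (one per blob direction), which is exactly what the tightness of the centerpoint bound provides, and this is what shaves the naive bound $q \ge 3p$ down to $q \ge 3p - 3$.

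The main obstacle will be turning the qualitative ``three tight half-planes'' picture into a rigorous inequality that correctly produces the constant $-3$ rather than a weaker or stronger constant. The delicate point is bookkeeping the angular positions of the rays relative to the blobs and handling the boundary cases where a ray passes exactly through a blob (which is why smoothing the atoms, so that $\mu$ is a genuine mass assigning zero measure to lines, is important). I would set up the estimate by fixing, for each convex $p$-wedge, the two supporting half-planes through $O$ whose intersection it is, invoking $\mu(H) \ge 1/3$ for the complementary half-planes, and summing these inequalities over a carefully chosen subset of the $p$-wedges so that the overlaps telescope; the challenge is choosing that subset and the half-planes so the tightness of exactly three of the inequalities yields $q < 3p-3 \Rightarrow$ contradiction. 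Once the inequality is assembled, the conclusion that no convex $(q,p)$-spiral equicovering exists for this particular $\mu$ follows immediately.
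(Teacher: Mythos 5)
You have chosen the correct mass---the paper uses exactly the same one, namely a mass for which the centerpoint bound $1/3$ is tight, so that \emph{every} point lies in some closed half-plane of $\mu$-measure at most $1/3$---but the argument that is supposed to convert this into a contradiction is missing, and the sketch you give of it is misdirected. Your ``crux'' step invokes the centerpoint property (``each half-plane through $O$ carries at least $1/3$ of the mass'') and proposes to sum such lower bounds over a telescoping family of $p$-wedges. There are three problems. First, the apex $O$ of a putative spiral is chosen by whoever builds the equicovering and need not be a centerpoint of $\mu$, so the inequality $\mu(H)\geq 1/3$ is not available; what the extremal mass guarantees at every possible apex is the \emph{opposite} inequality, the existence of a closed half-plane $H$ containing $O$ with $\mu(H)\leq 1/3$, and that is the inequality the proof must run on. Second, the telescoping sum is never actually set up: you explicitly defer the choice of the subset of wedges and of the half-planes, and that deferred step is the entire content of the proposition---note that the naive use of the centerpoint bound only yields $\mu(W_i)\leq 2/3$ for each convex wedge, i.e.\ $q\geq 3p/2$, which is even weaker than Proposition~\ref{prop:below2p}. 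Third, your heuristic for the constant (``three tight half-plane inequalities, one per blob, shave $3p$ down to $3p-3$'') does not correspond to any mechanism that can be made rigorous along the lines you describe, and it is not where the constant comes from.

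For comparison, the paper's proof is local rather than global: it fixes a single closed half-plane $H$ with $O$ on its boundary and $\mu(H)\leq 1/3$, orders the rays so that $\ell_1,\dots,\ell_j$ meet $H$ only at $O$ while $\ell_{j+1},\dots,\ell_q\subset H$, and examines only the $p$-wedge $W_j$ anchored at the last ray outside $H$. Since $V_j$ has measure exactly $1/q$ and must cross into $H$, the portion of $W_j$ outside $H$ has measure at most $1/q$; and convexity of $W_j$ forces its terminal ray $\ell_{j+p}$ to lie in $H$, so the remainder of $W_j$ is contained in $H$ and has measure at most $1/3$. Hence
\[
\frac{p}{q}=\mu(W_j)\leq \frac{1}{q}+\frac{1}{3},
\]
which is violated precisely when $q<3p-3$. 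The ``$-3$'' thus arises from the single slack term $1/q$ (one elementary wedge's worth of mass that $W_j$ may capture outside $H$), not from three separate tight inequalities. To complete your proposal you would need to replace the telescoping plan by an estimate of this one-wedge type.
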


\begin{proof}
Let $\mu$ be a mass with an optimal center-point; that is, for every point, there exists a closed half-plane $H$ containing it such that $\mu(H) \leq 1/3$. Suppose $q < 3p - 3$, and assume, for contradiction, that there exists a $(q,p)$-spiral equicovering $\{W_1, \dots, W_q\}$ of $\mu$, determined by a $q$-fan consisting of rays $\ell_1, \dots, \ell_q$ emanating from a point $O$, with associated wedges $V_1, \dots, V_q$. In particular, we have $\mu(W_i) = p/q$ and $\mu(V_i) = 1/q$ for each $i = 1, \dots, q$.

Let $H$ be a closed half-plane with $O$ on its boundary such that $\mu(H) \leq 1/3$.

By cyclically reordering the rays of the $q$-fan, we may assume that $\ell_1, \dots, \ell_j$ intersect $H$ only at the point $O$, and that $\ell_{j+1}, \dots, \ell_q$ are entirely contained in $H$ (see Fig.~\ref{fig:pq}).\footnote{Since our problem involves measures centered at a point $O$, the figures depict a disk in which each sector represents a corresponding wedge.} Let $m_1$ be the first counter-clockwise bounding ray of $H$ encountered starting from $\ell_j$, and let $m_2$ be the other bounding ray of $H$. Define $m_3$ as the last clockwise ray such that the counter-clockwise closed region $R$ from $m_3$ to $m_1$ has $\mu$-measure exactly $1/q$. 

The ray $\ell_j$ must lie within the region $R$, since otherwise the wedge $V_j$ would have $\mu$-measure greater than $1/q$, contradicting the assumption that $\mu(V_j) = 1/q$.

Now, since we are assuming that $W_j$ is convex, it follows that $\ell_{j+p} \subset H$. Therefore,
\[
    \mu(W_j) = \mu(W_j \setminus H) + \mu(W_j \cap H) \leq \frac{1}{q} + \frac{1}{3} < \frac{p}{q},
\]
which contradicts the assumption that $\mu(W_j) = p/q$. This completes the proof.
\end{proof}

\begin{figure}
    \centering
    \includegraphics[width=\textwidth]{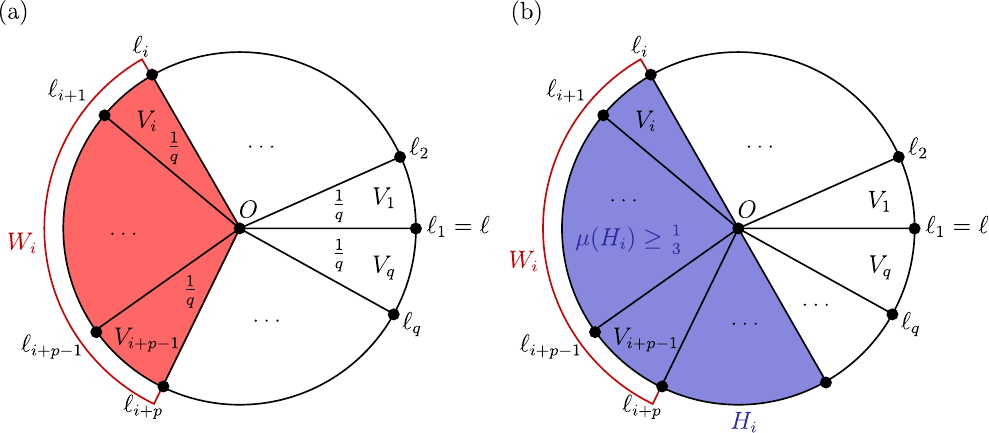}
    \caption{(a) The basic construction. (b) The construction for $q \geq 3p$.}
    \label{fig:basiccp}
\end{figure}

We now address the cases in Theorem~\ref{main} where spiral equicoverings exist. Throughout, we will repeatedly use the following \emph{basic construction}: given a mass $\mu$ and positive integers $p$ and $q$, choose a point $O$ and an initial ray $\ell$ emanating from $O$. Define $\ell_1 = \ell$, and for each $i = 1, \dots, q - 1$, recursively define $\ell_{i+1}$ as a ray such that the region swept counter-clockwise from $\ell_i$ to $\ell_{i+1}$ around $O$ has $\mu$-measure equal to $1/q$.\footnote{If some angular regions have zero $\mu$-measure, there may be more than one valid choice for $\ell_{i+1}$. Unless otherwise stated, any such choice is acceptable.}

For each $i = 1, \dots, q$, define the wedges $V_i$ and the $p$-wedges $W_i$ as before. By construction, we have $\mu(V_i) = 1/q$ and $\mu(W_i) = p/q$. See Figure~\ref{fig:basiccp}(a) for an illustration of the basic construction. In general, this procedure does not guarantee that the $p$-wedges are convex, so it is essential to choose both the center $O$ and the initial ray $\ell$ carefully.

As a warm-up, we begin with the case $q \geq 3p$.

\begin{proposition}\label{prop:firstpos}
Let $p/q$ be a reduced positive rational number. If $q \geq 3p$, then every mass $\mu$ admits a $(q,p)$-spiral equicovering.
\end{proposition}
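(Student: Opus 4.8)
The plan is to use the basic construction with a carefully chosen center $O$, and to exploit the large slack available when $q \geq 3p$ to guarantee convexity of every $p$-wedge. Recall that each wedge $W_i$ is convex precisely when its angular measure is at most $\pi$. So the goal reduces to showing that we can place the center and initial ray so that every $W_i$ subtends an angle of at most $\pi$. Since $W_i$ is the union of the $p$ consecutive unit-measure wedges $V_i, \dots, V_{i+p-1}$, it suffices to ensure that no $p$ consecutive $V$-wedges sweep out more than a half-plane.

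The first step is to choose $O$ to be a center-point of $\mu$. Then every closed half-plane $H$ with $O$ on its boundary satisfies $\mu(H) \geq 1/3$. I would like to turn this into an angular statement: any half-plane through $O$ has $\mu$-measure at least $1/3$, hence contains at least $q/3$ worth of measure, which by the construction corresponds to at least $\lceil q/3 \rceil$ of the unit wedges (in the measure-theoretic sense). Equivalently, any collection of consecutive wedges spanning an angle of exactly $\pi$ carries $\mu$-measure at least $1/3 \geq p/q$ whenever $q \geq 3p$. The key point I want to extract is a contrapositive: if some $p$ consecutive wedges $V_i,\dots,V_{i+p-1}$ spanned an angle strictly greater than $\pi$, then the complementary open half-plane determined by the bounding ray would have $\mu$-measure at most $(q-p)/q$ while simultaneously the half-plane condition forces a lower bound, and the arithmetic $q \geq 3p$ produces a contradiction.

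To make this precise I would argue as follows. Suppose for contradiction that some $p$-wedge $W_i$ spans an angle greater than $\pi$. Draw the diameter ray $d$ that bisects $W_i$'s starting ray $\ell_i$ through $O$, or more directly, consider the ray $\ell_i$ and its opposite ray, which together bound a half-plane $H$. Because $W_i$ spans more than $\pi$, the terminal ray $\ell_{i+p}$ lies strictly inside the half-plane on the far side of $\ell_i$, so $W_i$ contains an entire open half-plane bounded by a line through $O$ containing $\ell_i$. The complement of $W_i$ is then contained in the closed opposite half-plane $H'$, and has $\mu$-measure $(q-p)/q$. But the center-point property applied to $H'$ gives $\mu(H') \geq 1/3$, forcing $(q-p)/q \geq 1/3$, i.e.\ $q \geq 3p - $ (a correction term handling the measure on the boundary line, which is zero since $\mu$ is a mass). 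A clean handling of the boundary shows $q \geq 3p$ leads to equality being barred in the strict case, yielding the contradiction and hence every $W_i$ is convex.

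The main obstacle I anticipate is the bookkeeping at the boundary between ``angle $\pi$'' and ``measure $1/3$'': the center-point bound is about measure in half-planes, whereas convexity is about angles, and the basic construction only controls measure, not angle. Bridging these requires noting that a $p$-wedge of angle exactly $\pi$ is a half-plane through $O$, on which the center-point condition gives measure $\geq 1/3$; combined with its own measure $p/q \leq 1/3$ this pins down the borderline case and shows that staying within angle $\pi$ is always achievable. The strict inequalities versus non-strict ones, and the freedom in choosing the initial ray $\ell$ to break ties, are where I would spend the most care; I expect the condition $q \geq 3p$ (rather than $q > 3p$ or $q \geq 3p - c$) to be exactly what makes the inequality chain close without slack, so the argument should be tight and essentially forced by the center-point bound.
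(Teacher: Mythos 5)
Your overall strategy---take $O$ to be a center-point of $\mu$, run the basic construction, and compare the angle of each $p$-wedge with $\pi$ via a half-plane measure bound---is exactly the paper's, but the key step of your contradiction argument is wrong. Having supposed that some $W_i$ has angle greater than $\pi$, you correctly note that $W_i$ contains the closed half-plane $H_i$ swept counter-clockwise from $\ell_i$, and that the complement of $W_i$ is contained in the opposite half-plane $H'$. But you then apply the center-point bound to $H'$: from $\R^2 \setminus W_i \subseteq H'$, $\mu(\R^2 \setminus W_i) = (q-p)/q$, and $\mu(H') \geq 1/3$, nothing follows---both facts are merely lower bounds on $\mu(H')$, and the containment points the wrong way to transfer the center-point bound onto $(q-p)/q$. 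Worse, the inequality you claim to extract, $(q-p)/q \geq 1/3$, is equivalent to $q \geq 3p/2$, which is \emph{implied} by the hypothesis $q \geq 3p$ rather than contradicted by it; as written, your argument never reaches a contradiction.

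The repair is to apply the center-point property to the half-plane sitting \emph{inside} $W_i$, not the one containing its complement: since $H_i \subseteq W_i$, you get $p/q = \mu(W_i) \geq \mu(H_i) \geq 1/3$, i.e.\ $q \leq 3p$. This genuinely contradicts $q > 3p$, but the contradiction evaporates exactly at $q = 3p$ (i.e.\ $p/q = 1/3$, so $p = 1$, $q = 3$ in reduced form), where a wedge of angle greater than $\pi$ remains conceivable provided its excess beyond $H_i$ has $\mu$-measure zero. To close that case you must invoke the freedom of choice in the basic construction: the mass swept counter-clockwise from $\ell_i$ is a continuous non-decreasing function of the angle which already reaches $\mu(H_i) \geq 1/3 \geq p/q$ by angle $\pi$, so the ray $\ell_{i+p}$ can always be chosen at angle at most $\pi$. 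This is essentially the paper's proof, which is direct rather than by contradiction: it observes that $\mu(V_i) + \cdots + \mu(V_{i+p-1}) = p/q \leq 1/3 \leq \mu(H_i)$ forces the rays $\ell_i, \dots, \ell_{i+p}$ to lie in $H_i$, so each $W_i$ is contained in a half-plane and is therefore convex.
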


\begin{proof}[Proof of Theorem~\ref{main}]
Assume $q \geq 3p$, and let $\mu$ be a mass. Choose a center-point $O$ of $\mu$, so that every closed half-plane containing $O$ has $\mu$-measure at least $1/3$. Select any ray $\ell$ emanating from $O$, and perform the basic construction centered at $O$ with initial ray $\ell$. We aim to show that each $p$-wedge $W_i$ is convex for $i = 1, \dots, q$.

For each $i = 1, \dots, q$, let $H_i$ be the closed half-plane obtained by sweeping $\ell_i$ counter-clockwise about $O$ through an angle of $\pi$ (see Figure~\ref{fig:basiccp}(b)). Since $O$ is a center-point, we have $\mu(H_i) \geq 1/3$. By construction,
\[
    \mu(V_i) + \cdots + \mu(V_{i+p-1}) = \frac{p}{q} \leq \frac{1}{3} \leq \mu(H_i),
\]
which implies that all of the rays $\ell_i, \dots, \ell_{i+p}$ lie within $H_i$. In particular, the terminal ray $\ell_{i+p}$ is contained in $H_i$, so the $p$-wedge $W_i$ lies entirely within a half-plane and is therefore convex.
\end{proof}

For the remaining positive results, we make use of the following well-known generalization of the theorem by Buck and Buck mentioned earlier (see~\cite{BB1949,RS2022}). An illustration is provided in Figure~\ref{fig:bb}.
\begin{lemma}\label{lem:BBG}
Let $\mu$ be a mass, and let $a, b, c \geq 0$ be real numbers satisfying $a + b + c = 1/2$. Then there exist three concurrent lines that divide $\R^2$ into six regions $R_1, \dots, R_6$, labeled in counter-clockwise order, such that:
\begin{align*}
    \mu(R_1) = \mu(R_4) &= a, \\
    \mu(R_2) = \mu(R_5) &= b, \\
    \mu(R_3) = \mu(R_6) &= c.
\end{align*}
\end{lemma}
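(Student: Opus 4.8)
The plan is to prove Lemma~\ref{lem:BBG} by a continuity argument, after a reduction that exploits the normalization $a+b+c=1/2$. The key observation is that the six prescribed values come in three equal pairs, so the three ``top'' wedges carry total mass $a+b+c=1/2$; hence one of the three lines, say the line $L_1$ whose two rays bound $R_6,R_1$ on one end and $R_3,R_4$ on the other, should be chosen to \emph{bisect} $\mu$. Concretely, for each direction $\theta\in[0,2\pi)$ there is a line of that direction splitting $\R^2$ into two halves of $\mu$-measure $1/2$ each: sweep a line of direction $\theta$ across the plane and use that the cumulative measure is continuous and runs from $0$ to $1$. I would then place the common point $O$ on this bisector, parametrized by a signed position $t\in\R$ along it.

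Given $(\theta,t)$, I perform the basic construction starting from the ray of $L_1$ at angle $\theta$: let $\ell_1$ be this ray, and choose $\ell_2,\ell_3$ so that the wedge from $\ell_1$ to $\ell_2$ has measure $a$ and the wedge from $\ell_2$ to $\ell_3$ has measure $b$. The three concurrent lines through $O$ determined by $\ell_1,\ell_2,\ell_3$ then yield regions $R_1,\dots,R_6$. By construction $\mu(R_1)=a$ and $\mu(R_2)=b$; and since $L_1$ bisects $\mu$, the half $R_1\cup R_2\cup R_3$ has measure $1/2=a+b+c$, forcing $\mu(R_3)=c$ for free. Thus three of the six equalities hold automatically, and only the two conditions $\mu(R_4)=a$ and $\mu(R_5)=b$ remain (the last, $\mu(R_6)=c$, then follows again from the bisecting property). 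This defines a continuous map
\[
    F(\theta,t)=\bigl(\mu(R_4)-a,\ \mu(R_5)-b\bigr)\in\R^2,
\]
and the lemma reduces to showing that $F$ has a zero.

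To locate a zero I would run a degree / Poincaré--Miranda argument on the two-dimensional parameter space $(\theta,t)$, using two structural features. First, as $O$ recedes to infinity along $L_1$ (that is, $t\to\pm\infty$), the three concurrent lines become asymptotically parallel to $L_1$ and the wedges $R_4,R_5$ degenerate to half-planes or to empty regions; one then checks that each coordinate of $F$ acquires a controlled sign near the two ends, so that no zero escapes and the boundary values on the $t$-edges are pinned down. Second, the substitution $\theta\mapsto\theta+\pi$ keeps the same bisecting line but interchanges the two halves of the plane, hence interchanges the roles $R_1\leftrightarrow R_4$ and $R_2\leftrightarrow R_5$; tracking this antipodal symmetry around the circle $\theta\in[0,2\pi)$ supplies the sign change needed to force a winding of $F$. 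Compactifying $t$ and combining the two boundary analyses shows that the degree of $F$ on the resulting cylinder is nonzero, producing $(\theta_0,t_0)$ with $F(\theta_0,t_0)=0$, which gives the desired three concurrent lines.

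The main obstacle I anticipate is precisely this topological step: verifying the asymptotic behavior of $F$ as $O\to\infty$ and correctly bookkeeping which region carries the label $R_i$ as $\theta$ wraps around (the cyclic relabeling can shift indices), so that the antipodal relation $F(\theta+\pi,\cdot)\simeq -F(\theta,\cdot)$ holds in the precise sense that makes the degree genuinely nonzero. Two minor technical points must be dispatched along the way: the non-uniqueness of the bisector on zero-measure strips, resolved by a consistent choice or a limiting argument since $\mu$ assigns every line measure zero; and the degenerate cases in which one of $a,b,c$ equals $0$, where the corresponding wedge has measure zero but the construction and the continuity argument are unaffected. As a sanity check, the equal case $a=b=c=1/6$ recovers the Buck--Buck theorem, and the construction above is the natural weighted generalization referenced in \cite{BB1949,RS2022}.
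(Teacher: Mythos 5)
First, a point of comparison: the paper gives no proof of Lemma~\ref{lem:BBG} at all; it invokes it as a well-known generalization of the Buck--Buck theorem, citing \cite{BB1949,RS2022}, so your proposal must be judged on its own merits. Its first half is fine: placing the apex $O$ on a line bisecting $\mu$ and pinning $\mu(R_1)=a$, $\mu(R_2)=b$ does force $\mu(R_3)=c$, and reduces the lemma to finding a zero of $F(\theta,t)=(\mu(R_4)-a,\ \mu(R_5)-b)$. The gap is in the topological step, and it is fatal in the form you describe, for two concrete reasons. (i) The antipodal symmetry you rely on is false. At $(\theta+\pi,\cdot)$ with the same apex, your construction again pins the first two wedges to measures exactly $a$ and $b$; since in general $\mu(R_4)\neq a$ at $(\theta,t)$, the new second ray is not the ray opposite to $\ell_2$, so the configuration at $\theta+\pi$ is not the configuration at $\theta$ with labels shifted by three, and no identity of the form $F(\theta+\pi,\cdot)\simeq-F(\theta,\cdot)$ holds (only a weak one-coordinate sign relation survives). (ii) The boundary behavior yields degree zero, not a nonzero degree. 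As $O$ recedes to infinity along the bisector in either direction, both $\ell_2$ and $\ell_3$ rotate toward the single direction pointing from $O$ back at the bulk of the mass; hence the two opposite rays bounding $R_5$ point away from the mass and the angle between them tends to $0$, so $\mu(R_5)\to 0$, i.e.\ $F_2\to-b<0$ at \emph{both} ends $t\to\pm\infty$ (while $F_1\to-a$ and $F_1\to b+c$ respectively). Consequently both boundary circles of the compactified cylinder map into the half-plane $\{y<0\}$, each with winding number zero about the origin, so the degree of $F$ vanishes and no zero can be inferred; Poincar\'e--Miranda fails for the same reason, since $F_2$ has the same sign on the two $t$-edges.

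The classical proof repairs precisely this asymmetry, and it shows what your two parameters should actually be used for. Use $t$ to enforce the symmetric condition $\mu(R_1)=\mu(R_4)=a$: choose $O$ on the bisecting line so that the far bounding rays of these two wedges (one cut off at measure $a$ in each half-plane) are opposite rays, together forming the second line. This condition is invariant under swapping the two half-planes, so the two-line configuration is genuinely unchanged under $\theta\mapsto\theta+\pi$. Only one function remains, $h(\theta)=\mu(R_5)-b$ after pinning $\mu(R_2)=b$, and for it the sign-flip argument really works: if $h(\theta)>0$, the wedge that the construction at $\theta+\pi$ pins to measure $b$ is contained in the old $R_5$, so the new $R_5$ is contained in the old $R_2$, giving $h(\theta+\pi)\le 0$; continuity (with the usual care about uniqueness of these choices and about degenerate values $a,b,c=0$) and the intermediate value theorem then finish. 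In your coordinates, this amounts to restricting $F_2$ to the antipodally invariant curve $\{F_1=0\}$ rather than computing a two-dimensional degree whose boundary data, as shown above, cannot detect a zero.
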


\begin{figure}
    \centering
    \includegraphics[width=0.45\textwidth]{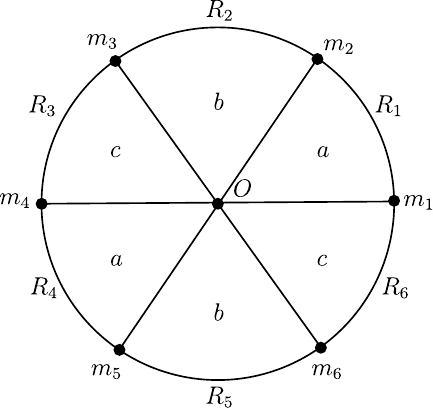}
    \caption{An illustration of Lemma~\ref{lem:BBG}: for any mass, the plane can be partitioned using three concurrent lines as described.}
    \label{fig:bb}
\end{figure}

The general strategy for the following three cases is similar. We begin by selecting appropriate values for $a$, $b$, and $c$, and then apply Lemma~\ref{lem:BBG}. When using this lemma, we denote by $O$ the point of concurrency of the three lines. For $j = 1, \ldots, 6$, let $m_j$ be the ray emanating from $O$ that separates the regions $R_{j-1}$ and $R_j$ (with indices taken modulo $6$). See Figure~\ref{fig:bb}.

For each case, we perform a basic construction that subdivides each region $R_j$ into smaller wedges of equal $\mu$-measure, ensuring that the rays $m_j$ are used as rays of the construction. 

In the case $q = 3p - 3$ with $p$ odd, a basic construction starting at $m_1$ is sufficient to obtain a spiral equicovering. The cases $q = 3p - 1$ and $q = 3p - 2$ require a basic construction that produces two orbits of $(2p)$-wedges. In the former, either orbit suffices. In the latter, we must select one of the orbits to avoid a potentially problematic obtuse angle, which could prevent convexity.

\begin{figure}
    \centering
    \includegraphics[width=\textwidth]{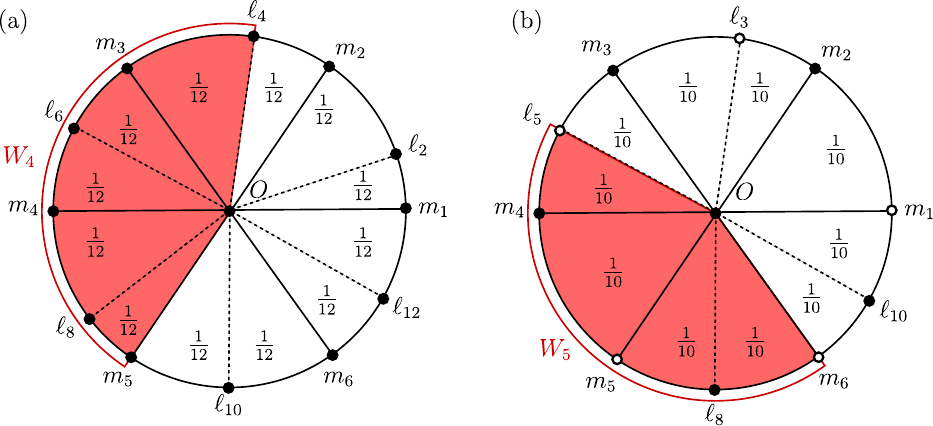}
    \caption{(a) The construction for the case $q = 3p - 3$ with $p = 5$. 
    (b) The construction for the case $q = 3p - 1$ with $p = 2$. 
    In (b), the resulting wedges are grouped into two orbits, distinguished by whether they originate at a black or white dot.}
    \label{fig:casesAB}
\end{figure}

\begin{proposition}\label{prop:3p-3}
    Let $p/q$ be a reduced positive rational number. If $q = 3p - 3$ and $p$ is odd, then every mass $\mu$ admits a $(q,p)$-spiral equicovering.    
\end{proposition}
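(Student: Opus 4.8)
The plan is to set up the three concurrent lines so that the six regions they produce each split into exactly $(p-1)/2$ sub-wedges of measure $1/q$. Concretely, I would apply Lemma~\ref{lem:BBG} with $a=b=c=1/6$, obtaining three concurrent lines through a point $O$, six regions $R_1,\dots,R_6$ with $\mu(R_j)=1/6$, and the bounding rays $m_1,\dots,m_6$. Put $s=(p-1)/2$; this is a positive integer precisely because $p$ is odd, and the parity hypothesis is used nowhere else. Since $q=3(p-1)=6s$, every region has measure $1/6=s/q$, so a basic construction started at $\ell_1=m_1$ subdivides each $R_j$ into exactly $s$ sub-wedges of measure $1/q$, with the bounding rays realized as $m_j=\ell_{1+(j-1)s}$ (indices cyclic, $\ell_{1+6s}=\ell_1$). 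This yields sub-wedges $V_i$ and $p$-wedges $W_i$ of the required measures $1/q$ and $p/q$; it remains only to prove that each $W_i$ is convex.

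For convexity I would track cumulative counter-clockwise angles: lift the rays to reals $0=\theta_1\le\theta_2\le\cdots$ with $\theta_{j+q}=\theta_j+2\pi$, so that the angle of $W_i$ equals $\theta_{i+p}-\theta_i$. The one structural fact I need is that $m_r$ and $m_{r+3}$ are opposite rays of a common line, whence the three regions between them have angles summing to $\pi$; in lifted form this reads $\theta_{1+(r+3)s}=\theta_{1+rs}+\pi$ for every $r\ge 0$. Now fix $i$ and write $i=1+ks+t$ with $0\le t\le s-1$. Then $\ell_i$ lies in the closure of $R_{k+1}$, so $\theta_i\ge\theta_{1+ks}$. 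Because $p=2s+1$, we have $i+p=1+(k+2)s+(t+1)$ with $1\le t+1\le s$, so $\ell_{i+p}$ lies in the closure of $R_{k+3}$ and $\theta_{i+p}\le\theta_{1+(k+3)s}$. Combining these bounds with the antipodality identity gives
\[
\theta_{i+p}-\theta_i\;\le\;\theta_{1+(k+3)s}-\theta_{1+ks}\;=\;\pi,
\]
so $W_i$ is convex, for every $i$.

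I expect the main obstacle to be finding the alignment itself rather than the estimates: the choice $a=b=c=1/6$ is exactly what forces each region into $s=(p-1)/2$ equal sub-wedges and positions the antipodal rays $m_{k+1},m_{k+4}$ so that they sandwich the two endpoints $\ell_i,\ell_{i+p}$ of every $p$-wedge; once this is arranged, convexity drops out of the single identity $\theta_{1+(r+3)s}=\theta_{1+rs}+\pi$ with no case analysis on where $\ell_i$ sits inside its region. One technical point, resolved by the flexibility in the basic construction noted earlier (and by the fact that a mass charges no line, hence has no atoms), is that when some angular sectors carry zero measure we must make the recursive choices so that each $m_j$ is actually used as a ray; this is always permissible because $\mu(R_1)=\cdots=\mu(R_6)=s/q$ exactly, making each $m_j$ an admissible value at the corresponding step.
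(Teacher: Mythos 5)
Your proposal is correct and follows essentially the same route as the paper: apply Lemma~\ref{lem:BBG} with $a=b=c=1/6$, run the basic construction from $m_1$ so that each region $R_j$ splits into exactly $(p-1)/2$ measure-$1/q$ wedges with $m_j=\ell_{1+(j-1)(p-1)/2}$, and conclude that every $p$-wedge spans only three consecutive regions and hence lies in a half-plane. Your lifted-angle bookkeeping via $\theta_{1+(r+3)s}=\theta_{1+rs}+\pi$ is just a more formal rendering of the paper's observation that three consecutive regions are bounded by opposite rays of one of the concurrent lines.
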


\begin{proof}
Let $p = 2r + 1$, so that $q = 3p - 3 = 6r$, and note that $r \geq 1$. Apply Lemma~\ref{lem:BBG} with $a = b = c = 1/6$, and denote by $O$ the point of concurrency and by $m_1, \dots, m_6$ the rays between the resulting regions $R_1, \dots, R_6$, as described earlier. 

Perform a basic construction centered at $O$ with initial ray $m_1$. Since each region $R_j$ has $\mu$-measure $1/6$ and $q = 6r$, we can ensure that, for each $j = 1, \dots, 6$, the ray $\ell_{(j-1)r+1}$ coincides with $m_j$. See Figure~\ref{fig:casesAB}(a) for an example with $r = 2$.

Each $p$-wedge $W_i$ consists of $p = 2r + 1$ consecutive wedges $V_i$, and each region $R_j$ contains exactly $r$ such wedges. Therefore, every $W_i$ spans three consecutive regions $R_j$, and is thus entirely contained within a half-plane. This implies that $W_i$ is convex, completing the proof.
\end{proof}

The case $q = 3p - 1$ requires a different choice of values for $a, b, c$ in Lemma~\ref{lem:BBG}, and instead uses a basic construction based on wedges of $\mu$-measure $1/(2q)$ rather than $1/q$. With these adjustments, the overall structure of the argument remains similar to the previous case, though it requires additional care in ensuring convexity.

\begin{proposition}\label{prop:3p-1}
    Let $p/q$ be a reduced positive rational number. If $q = 3p - 1$ and $p \geq 2$, then any mass $\mu$ admits a $(q,p)$-spiral equicovering.    
\end{proposition}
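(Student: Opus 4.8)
The plan is to repeat the strategy of Proposition~\ref{prop:3p-3}, but at twice the resolution, so that the asymmetry forced by $q = 3p-1$ can be absorbed. First I would set $q = 3p-1$, note $2q = 6p-2$, and apply Lemma~\ref{lem:BBG} with
\[
a = b = \frac{p}{2q}, \qquad c = \frac{p-1}{2q},
\]
which are nonnegative (as $p \ge 2$) and satisfy $a+b+c = \frac{3p-1}{2q} = \frac{1}{2}$. I would write $O$ for the common point and $m_1,\dots,m_6$ for the six rays, so $R_1,R_4$ have measure $a$, $R_2,R_5$ measure $b$, and $R_3,R_6$ measure $c$. The reason for working at resolution $1/(2q)$ rather than $1/q$ is that the symmetric six-region structure needs each opposite pair to carry equal measure, which forces the total small-wedge count to be even; since $q=3p-1$ may itself be odd, one must pass to $2q$.

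Next I would run the basic construction at resolution $1/(2q)$, cutting $\R^2$ into small wedges $V_1,\dots,V_{2q}$ of measure $1/(2q)$ starting at $\ell_1 = m_1$, and aligning the cuts so that every $m_j$ is one of the rays $\ell_i$. This is possible because each region's measure is an integer multiple of $1/(2q)$: the region sizes (in small wedges) follow the cyclic pattern $p, p, p-1, p, p, p-1$. Letting $W_i$ denote the $(2p)$-wedge spanning $V_i,\dots,V_{i+2p-1}$ (so $\mu(W_i) = p/q$), I would split the family $W_1,\dots,W_{2q}$ into the two orbits of odd- and even-indexed wedges, as in Figure~\ref{fig:casesAB}(b).

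Then I would verify that the odd orbit $\{W_1,W_3,\dots,W_{2q-1}\}$ is a genuine $(q,p)$-spiral equicovering: its $q$ starting rays $\ell_1,\ell_3,\dots$ form a $q$-fan whose wedges have measure $2/(2q)=1/q$, each $W_{2i-1}$ is the corresponding $p$-wedge of that fan, and a fixed small wedge $V_k$ lies in $W_i$ exactly when $i$ ranges over the $2p$ consecutive indices $k-2p+1,\dots,k$, of which exactly $p$ are odd; hence every point is covered exactly $p$ times.

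The crux is convexity, and here the balanced choice of $a,b,c$ is decisive. Since opposite rays $m_j,m_{j+3}$ are collinear, the union $R_j\cup R_{j+1}\cup R_{j+2}$ of any three consecutive regions is a closed half-plane, and it contains exactly $p+p+(p-1)=3p-1$ small wedges (every three consecutive terms of the pattern $p,p,p-1,\dots$ sum to $3p-1$). If a $(2p)$-wedge begins at the $(s+1)$-st small wedge of region $R_j$, it is contained in this half-plane precisely when $s+2p \le 3p-1$, i.e. $s \le p-1$; and this always holds because each region has at most $p$ small wedges, so $s \le A_j - 1 \le p-1$. Thus every $(2p)$-wedge lies in a half-plane and is convex, which is why \emph{either} orbit suffices and no orbit selection is needed, in contrast with the $q=3p-2$ case. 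The main obstacle I anticipate is exactly this containment bookkeeping: confirming that trimming the region sizes to be at most $p$ — which costs one small wedge, yielding $3p-1$ rather than $3p$ — is simultaneously realizable via Lemma~\ref{lem:BBG} and strong enough to trap every $(2p)$-wedge inside three consecutive regions.
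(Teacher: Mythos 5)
Your proposal is correct and takes essentially the same route as the paper's proof: the same application of Lemma~\ref{lem:BBG} (your choice $a=b=p/(2q)$, $c=(p-1)/(2q)$ is a cyclic relabeling of the paper's $(p-1)/(2q),\,p/(2q),\,p/(2q)$), the same basic construction at resolution $1/(2q)$ producing two orbits of $(2p)$-wedges, and the same convexity argument via containment in three consecutive regions, whose union is a half-plane. Your bookkeeping is in fact somewhat more explicit than the paper's (the multiplicity count showing exactly $p$ of the $2p$ covering indices are odd, and the bound $s \le p-1$), where the paper only points to the worst-case starting rays.
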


\begin{proof}
Let $p = r + 1$, so that $q = 3r + 2$ with $r \geq 1$. Apply Lemma~\ref{lem:BBG} with $a = \frac{r}{6r + 4}$ and $b = c = \frac{r + 1}{6r + 4}$, and denote by $O$ the point of concurrency and by $m_1, \dots, m_6$ the rays as before.

Now, perform a basic construction centered at $O$, starting from the ray $m_1$, but use wedges of $\mu$-measure $1/2q = \frac{1}{6r + 4}$ instead of $1/q$, and define $(2p)$-wedges instead of $p$-wedges. Since the measure of each region $R_j$ is an integer multiple of $1/2q$, we can carry out the construction so that:
\[
\ell_1 = m_1,\ \ell_{r+1} = m_2,\ \ell_{2r+2} = m_3,\ \ell_{3r+3} = m_4,\ \ell_{4r+3} = m_5,\ \ell_{5r+4} = m_6.
\]
See Figure~\ref{fig:casesAB}(b) for an example with $r = 1$.

This yields $2q$ rays and $2q$ $(2p)$-wedges $W_1, \dots, W_{2q}$, each of $\mu$-measure $\frac{2p}{2q} = \frac{p}{q}$. Since $\gcd(2p, 2q) = 2$, these wedges fall into two disjoint orbits:
\[
\mathcal{O}_1 = \{W_1, W_3, \dots, W_{2q - 1}\}, \quad
\mathcal{O}_2 = \{W_2, W_4, \dots, W_{2q}\},
\]
each forming a $(q, p)$-spiral with the desired $\mu$-measure.

To complete the proof, we show that each $W_i$ is convex. Each $(2p)$-wedge $W_i$ consists of $2r + 2$ consecutive wedges $V_i$. Even in the worst-case scenario ($W_i$ beginning at $\ell_{3r+2}$ or at $\ell_{6r+4}$) the full block of $2r + 2$ wedges lies entirely within three consecutive regions $R_j$, which are themselves confined to a half-plane. Hence, each $W_i$ is convex, and either orbit $\mathcal{O}_1$ or $\mathcal{O}_2$ gives a valid $(q, p)$-spiral equicovering.
\end{proof}

In the previous case $q = 3p - 1$, the construction yielded two disjoint orbits of $(2p)$-wedges, and each orbit formed a valid $(q,p)$-spiral equicovering. In the case $q = 3p - 2$, we follow a similar construction. However, the convexity condition may fail for one of the two orbits due to the angular configuration of the wedges.
We show that although convexity may fail for one orbit, it always holds for the other. Thus, at least one of the two orbits will yield a valid convex $(q,p)$-spiral equicovering.

\begin{figure}
    \centering
        \includegraphics[width=\textwidth]{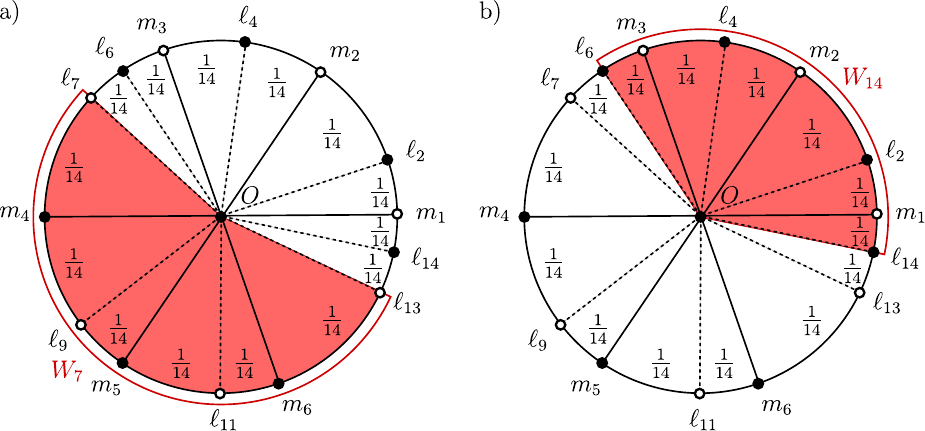}
    \caption{Both figures depict the same construction for case $q=3p-2$, with $p=3$. Figure (a) shows a possibly non-convex $W_7$ in the white orbit. Figure (b) shows that $W_{14}$, in the black orbit, is disjoint from $W_7$, so it must be convex. Each of the remaining $6$-wedges in the black orbit lies in three consecutive regions $R_j$.}\label{fig:lastcase}
\end{figure}

\begin{proposition}\label{prop:3p-2}
Let $p/q$ be a reduced positive rational number. If $q = 3p - 2$ and $p \geq 2$, then any mass $\mu$ admits a $(q,p)$-spiral equicovering.    
\end{proposition}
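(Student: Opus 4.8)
The plan is to mirror the construction used for $q = 3p-1$ in Proposition~\ref{prop:3p-1}, adapting the choice of $a,b,c$ and then isolating the single wedge in each orbit whose convexity is not automatic. First I would observe that, since $\gcd(p, 3p-2) = \gcd(p,2)$, the reduced hypothesis forces $p$ to be odd; write $p = 2s+1$ and $q = 6s+1$ with $s \geq 1$. I would apply Lemma~\ref{lem:BBG} with $a = b = \frac{s}{6s+1}$ and $c = \frac{2s+1}{2(6s+1)}$ (which satisfy $2a + c = 1/2$), obtaining the concurrency point $O$ and rays $m_1,\dots,m_6$, with the two larger regions $R_3$ and $R_6$ of measure $c$. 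Then I would perform the basic construction centered at $O$ starting from $m_1$, using wedges of $\mu$-measure $1/(2q)$ and defining $(2p)$-wedges, so that each region $R_j$ is subdivided into an integer number of small wedges: $2s$ in each region of measure $a$ or $b$, and $2s+1$ in each of $R_3, R_6$. As in the previous case this produces $2q$ wedges $W_1,\dots,W_{2q}$ of $\mu$-measure $p/q$, splitting into an odd-index orbit $\mathcal{O}_1$ and an even-index orbit $\mathcal{O}_2$, each a $(q,p)$-spiral.

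Next I would establish convexity for all but two wedges by a counting argument. Since the region sizes repeat with period three, any three consecutive regions contain exactly one region of each type, hence exactly $n_a + n_b + n_c = q$ small wedges, and these three regions lie in the half-plane bounded by the line through $m_j$ and $m_{j+3}$. A $(2p)$-wedge beginning at the $(t+1)$-st small wedge of a region $R_j$ is contained in $R_j \cup R_{j+1} \cup R_{j+2}$ precisely when $2p \leq q - t$, i.e. when $t \leq 2s-1$. In a region of size $2s$ every starting position satisfies this, whereas in a region of size $2s+1$ the sole violating position is the last small wedge ($t = 2s$). Thus the only wedges not automatically confined to a half-plane are the two beginning at the last small wedge of $R_3$ and of $R_6$; a short index computation identifies these as $W_q$ and $W_{2q}$ and shows, using that $q$ is odd, that $W_q$ has odd index and $W_{2q}$ has even index. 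Consequently each orbit contains exactly one potentially non-convex wedge.

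The decisive step, which I expect to be the main obstacle, is to show that at least one of $W_q$ and $W_{2q}$ is in fact convex. For this I would list the small wedges each one covers, namely $V_{6s+1},\dots,V_{10s+2}$ for $W_q$ and $V_{12s+2}, V_1,\dots,V_{4s+1}$ for $W_{2q}$, and verify that these index sets are disjoint. As angular sectors emanating from $O$, disjoint wedges have angles summing to at most $2\pi$, so they cannot both be reflex; hence at least one has angle at most $\pi$ and is convex. If $W_q$ is convex then every wedge of $\mathcal{O}_1$ is convex, and if $W_{2q}$ is convex then every wedge of $\mathcal{O}_2$ is convex, so in either case one orbit yields a convex $(q,p)$-spiral equicovering. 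The subtle points to handle carefully are the exact alignment of the $m_j$ with the rays of the construction and the parity bookkeeping that places one exceptional wedge in each orbit; once these are in place, the disjointness argument is precisely what forces at least one orbit to succeed.
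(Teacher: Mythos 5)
Your proposal is correct and follows essentially the same approach as the paper's proof: the same choice of $a,b,c$ in Lemma~\ref{lem:BBG} (your $a=b=\frac{s}{6s+1}$, $c=\frac{2s+1}{2(6s+1)}$ equals the paper's $\frac{2r}{12r+2}$, $\frac{2r+1}{12r+2}$), the same basic construction with wedges of measure $1/(2q)$ split into two orbits, and the same identification of the two exceptional wedges $W_{q}$ and $W_{2q}$, whose disjointness forces at least one to have angle at most $\pi$ and hence be convex. Your explicit counting of starting positions and index sets merely spells out details the paper leaves implicit.
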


\begin{proof}
If $p$ is even, then $q = 3p - 2$ is also even, so $p$ and $q$ cannot be coprime. Hence, $p$ must be odd. Write $p = 2r + 1$, so that $q = 3p - 2 = 6r + 1$, with $r \geq 1$.

Use Lemma~\ref{lem:BBG} with $a = b = \frac{2r}{12r + 2}$ and $c = \frac{2r + 1}{12r + 2}$, and denote the center of concurrency by $O$ and the six separating rays by $m_1,\dots,m_6$, as above. Perform a basic construction with center $O$ and initial ray $m_1$, but use $\mu$-measure $1/2q$ instead of $1/q$, and define $(2p)$-wedges instead of $p$-wedges. Since the $\mu$-measure of each region $R_j$ is an integer multiple of $1/2q = \frac{1}{12r + 2}$, we may carry out the construction so that:
\[
\ell_1 = m_1,\ \ell_{2r+1} = m_2,\ \ell_{4r+1} = m_3,\ \ell_{6r+2} = m_4,\ \ell_{8r+2} = m_5,\ \ell_{10r+2} = m_6.
\]
See Figure~\ref{fig:lastcase} for an example with $r = 1$.

The construction yields $(2q)$-wedges $W_1,\dots,W_{2p}$ of $\mu$-measure $\frac{2p}{2q} = \frac{p}{q}$. Since $\gcd(2p, 2q) = 2$, these wedges are split into two orbits:
\begin{align*}
    \mathcal{O}_1 &= \{W_1, W_3, \dots, W_{2q-1}\}, \\
    \mathcal{O}_2 &= \{W_2, W_4, \dots, W_{2q}\},
\end{align*}
each of which forms a $(q,p)$-spiral equicovering consisting of wedges with the desired $\mu$-measure.

Each of these wedges consists of $4r+2$ consecutive wedges $V_i$ from the basic construction. In all cases, except for $W_{6r+1}$ and $W_{12r+2}$, the wedges $V_i$ lie entirely within three consecutive regions $R_j$, ensuring convexity. However, $W_{6r+1}$ and $W_{12r+2}$ may span more than three regions. However, these two wedges belong to different orbits and are disjoint, so at least one of them must have an angle of at most $\pi$, and hence is convex. In Figure~\ref{fig:lastcase}(a) the wedge $W_7$ fails, but the wedge $W_{14}$ in Figure~\ref{fig:lastcase}(b) does not. Therefore, at least one of the orbits $\mathcal{O}_1$ or $\mathcal{O}_2$ yields a valid $(q,p)$-spiral equicovering.
\end{proof}

\section{Other Equicoverings}

Previously, we introduced a general method for constructing $(q,p)$-equicoverings via spiral configurations. This technique enables the construction of such equicoverings for a wide range of values of $p$ and $q$. Moreover, in certain cases, it is possible to construct multiple distinct spiral equicoverings.

The existence of non-spiral equicoverings, however, remains poorly understood. In our search for such configurations, we found non-spiral examples of $(8,3)$-equicoverings, but could not find a general method for constructing non-spiral equicoverings.

In this section, we present the non-spiral $(8,3)$-equicoverings we discovered. Whether non-spiral equicoverings exist for other values of $(q,p)$ remains an open question.

\begin{proposition}
	Every mass $\mu$ admits a non-spiral $(8,3)$-equicovering.
\end{proposition}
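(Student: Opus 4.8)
The plan is to exhibit eight convex wedges whose recession cones cover the circle of directions $S^1$ exactly $p=3$ times, and then to tune their apexes and boundary rays so that this three-fold covering persists in the bounded part of the plane while every wedge acquires $\mu$-measure $3/8$. The guiding principle is that for \emph{any} equicovering by convex wedges the recession cones must cover $S^1$ exactly three times, since a point far from every apex lies in precisely those wedges whose recession cone contains its direction. This already explains why a genuinely non-spiral example is delicate: if all eight wedges emanate from a single apex, then, writing each wedge as a block of consecutive elementary wedges of equal $\mu$-measure, the requirement that every window of consecutive elementary wedges be covered exactly three times forces the periodic, shift-invariant pattern that is precisely the $(8,3)$-spiral. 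Thus I would look for a configuration using at least two distinct apexes.

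With several apexes the covering is no longer automatically three-fold away from infinity: crossing a boundary ray changes the covering multiplicity by $\pm 1$. The key device is therefore to place the apexes so that all sixteen boundary rays lie on a small common family of lines, arranged so that along each such line the ``entering'' and ``leaving'' rays alternate and the net jump in multiplicity is zero at every point off finitely many lines. Concretely, I would try a configuration that is symmetric under a rotation or reflection: fix a center $O$ and a second apex (or a symmetric pair of apexes) on lines through $O$, require the eight wedges to have their edges on a fixed arrangement of a few lines, and then verify the three-fold covering cell by cell — each bounded cell of the arrangement should lie in exactly three wedges, while each unbounded cell is governed by the recession-cone count. Because the wedges do not share a common apex, such an equicovering is manifestly not a spiral.

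Having fixed the combinatorial pattern (the arrangement of lines and the assignment of cells to wedges) for one convenient mass, I would treat the positions of the apexes along their lines, together with the offsets and directions of the lines, as continuous parameters. Moving these parameters preserves the three-fold covering as long as the combinatorial type of the arrangement does not degenerate, while the eight $\mu$-measures vary continuously; I would then solve the system $\mu(W_i)=3/8$ for $i=1,\dots,8$ by an intermediate-value argument, seeding a balanced starting configuration with Lemma~\ref{lem:BBG} (choosing $a,b,c$ so that the relevant sectors already have measures compatible with $3/8$) and using the symmetry to reduce the number of independent equations. The same continuity argument, now applied to an arbitrary mass $\mu$, would upgrade the single convenient example to \emph{every} mass.

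The hard part will be reconciling exact three-fold coverage with the eight equal-measure constraints and convexity simultaneously. Exact coverage forces the rigid cancellation of boundary rays along shared lines, which severely constrains where the apexes may sit; at the same time the equality $\mu(W_i)=3/8$ rules out the cleanest scaffolds, since no two wedges may be complementary halfplanes (as $3/8+3/8\neq 1$, one cannot use both sides of a single line), so every wedge must have angle strictly less than $\pi$ and the lines can be shared only partially. Balancing these competing demands — enough shared lines to make the multiplicity jumps cancel, enough freedom in the parameters to realize all eight measures, and angle at most $\pi$ to keep each wedge convex — is the real obstacle, and is presumably the reason an explicit construction is available for $(8,3)$ while no general recipe is known.
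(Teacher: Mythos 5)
There is a genuine gap: your proposal is a research plan, not a proof. You correctly identify necessary conditions (the recession cones must cover $S^1$ exactly three times; a common apex forces the $(8,3)$-spiral, which is provable by noting that the multiset of start directions must be invariant under rotation by measure $3/8$, hence is a single orbit of size $8$), and you correctly sense that multiplicity jumps across boundary rays must cancel along shared lines. But you never exhibit a configuration, never verify a covering, never solve the measure equations, and you explicitly concede the central difficulty in your last paragraph. Moreover, your closing device --- build one example for a convenient mass and ``upgrade'' it to every mass by continuity --- is not a valid step as stated: there is no argument for transporting an equicovering across the space of masses while preserving the combinatorics, the measures, and convexity simultaneously.

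The idea you are missing is much simpler than full cancellation along a family of lines: decompose the multiplicity $3$ as $2+1$ across a \emph{single} shared line. The paper takes a line $\ell_H$ bisecting $\mu$ into half-planes $H^+$ and $H^-$, and builds two partial spirals, each winding one and a half times around its own center on $\ell_H$. The left center $O_L$ is chosen via an asymmetric Ham--Sandwich cut $\ell_L$ (measures $3/8$ and $1/8$ above $\ell_H$, $2/8$ and $2/8$ below); four consecutive $3$-wedges of the basic construction with elementary measure $1/8$ then cover $H^+$ twice and $H^-$ once, each convex and of measure $3/8$. A mirror construction around $O_R$ covers $H^+$ once and $H^-$ twice. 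The union of the two families covers almost every point exactly $2+1=3$ times, all eight pieces have measure $3/8$ by construction (no intermediate-value tuning is needed), and since $O_L$ lies strictly to the left of $O_R$ the covering cannot be a spiral. This works directly for an arbitrary mass, which is why the paper needs no deformation argument at all.
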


\begin{proof}
Let $\ell_H$ be a line that divides the plane into two half-planes $H^+$ and $H^-$ such that $\mu(H^+) = \mu(H^-) = 1/2$. We regard $\ell_H$ as a horizontal line, with $H^+$ lying above it.

Next, we define a line $\ell_L$ with the following properties. The line $\ell_L$ subdivides $H^+$ into two regions: $H^{++}_L$ to the right of $\ell_L$ and $H^{+-}_L$ to the left. Similarly, it subdivides $H^-$ into $H^{-+}_L$ to the right of $\ell_L$ and $H^{--}_L$ to the left. These four regions satisfy
\[
\mu(H^{++}_L) = \frac{3}{8}, \quad
\mu(H^{+-}_L) = \frac{1}{8}, \quad
\mu(H^{-+}_L) = \mu(H^{--}_L) = \frac{2}{8},
\]
as shown in the upper part Figure \ref{fig:nospiral}(a).
The existence of the line $\ell_L$ follows from an asymmetric version of the Ham–Sandwich Theorem.

\begin{figure}
    \centering
    \includegraphics[width=\textwidth]{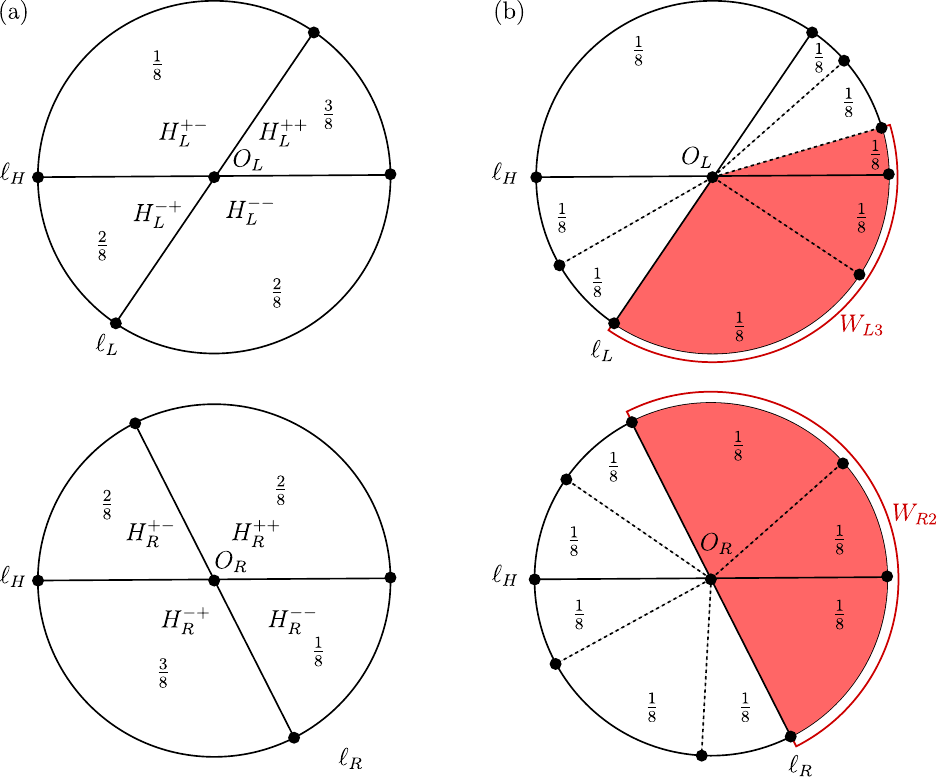}
    \caption{(a) Partitions in regions with $\mu$-measure equal to $\frac{1}{8}$, $\frac{2}{8}$, $\frac{2}{8}$ and $\frac{3}{8}$ specific measures (b) Construction of a non-spiral equicovering.}
    \label{fig:nospiral}
\end{figure}

Let $O_L$ denote the intersection point between $\ell_H$ and $\ell_L$. We now define a spiral covering centered at $O_L$ that winds around it one and a half times. Begin with a ray emanating to the right from $O_L$ and construct wedges of $\mu$-measure $1/8$. From these, consider the family containing the first four corresponding $3$-wedges $\Pp_L=\{W_{L1}, W_{L2}, W_{L3}, W_{L4}\}$. This construction is depicted in the upper part of Figure \ref{fig:nospiral}(b), where $W_{L3}$ is shown.

It is straightforward to verify that these four $3$-wedges are convex. Moreover, they cover the upper half-plane $H^+$ twice and the lower half-plane $H^-$ once.

Next, we perform an analogous construction to obtain the remaining four pieces. Define the line $\ell_R$ that divides $H^+$ and $H^-$ into the regions $H^{++}_R$, $H^{+-}_R$, $H^{-+}_R$ and $H^{--}_R$. These regions satisfy
\[
\mu(H^{++}_R) = \mu(H^{+-}_R) = \frac{2}{8}, \quad
\mu(H^{-+}_R) = \frac{1}{8}, \quad
\mu(H^{--}_R) = \frac{3}{8},
\]
as illustrated in the lower part of Figure \ref{fig:nospiral}(a).

Proceeding as before, we obtain a cover $\Pp_R=\{W_{R1}, W_{R2}, W_{R3}, W_{R4}\}$ which covers $H^+$ once and $H^-$ twice and where each element has $\mu$-measure $3/8$. This cover is shown in the lower part of Figure \ref{fig:nospiral}(b), where $W_{R2}$ is depicted.

Then the family $\Pp_L\cup\Pp_R$ forms a $(8,3)$-equicovering, as desired. Note that from the definition of $\ell_L$ and $\ell_R$ it follows that $O_L$ must be strictly to the left of $O_R$, therefore this cover cannot be a spiral equicovering.
\end{proof}

As we can see from the previous proof, there remains a degree of freedom since the line $\ell_H$ can be chosen in an arbitrary direction. This flexibility can be exploited in various ways. For instance, $\ell_H$ could bisect a second mass, or the two lines $\ell_L$ and $\ell_R$ could be required to be parallel, similar to what has been done in some equipartition results \cite{RS2014}.

It is possible that by considering general equicoverings, we could obtain positive results for a broader range of $(q,p)$ values compared to spiral equicoverings.

\section*{Acknowledgements}

This work was supported by UNAM-PAPIIT IN111923.

%\begin{lemma}\label{lem:par}
%    Let $\mu$ be a mass. There exist a bisector line $\ell$ of $\mu$, and two parallel lines crossing $\ell$ that divides $\mathbb{R}^{2}$ into six regions $R_{1},\dots, R_{6}$, labeled in counter-clockwise order, such that
%    \[\mu(R_{1})=\mu(R_{2})=\mu(R_{4})=\mu(R_{5})=\frac{1}{6},\qquad\qquad\mu(R_{3})=\mu(R_{6}),\]
%    and with $R_{1}$, $R_{2}$ and $R_{3}$ at the same side of $\ell$.
%\end{lemma}
%
%\begin{remark}
%    For every mass $\mu$ is it possible to obtain a non-spiral convex $(8,3)$-equicovering from the partition in Lemma \ref{lem:par}.
%\end{remark}

\bibliographystyle{amsalpha}
\bibliography{main}
	
\end{document}